\theoremstyle{plain}
\newtheorem{theorem}{Theorem}
\newtheorem*{Ack}{Acknowledgements}
\theoremstyle{definition}
\newtheorem{remark}[theorem]{Remark}
\newtheorem{example}[theorem]{Example}
\theoremstyle{remark}
\title{Frobenius elements in Galois representations with $\SL_n$ image}
\author{Matthew Bisatt}
\address{Department of Natural and Mathematical Sciences, Strand Campus, King's College London, London, WC2R 2LS, United Kingdom}
\email{matthew.bisatt@kcl.ac.uk}
\date{\today}
\begin{document}
\global\long\def\fl{\mathbb{F}_l}
\global\long\def\qq{\mathbb{Q}}
\global\long\def\cc{\mathbb{C}}
\global\long\def\zz{\mathbb{Z}}
\global\long\def\Gal{\operatorname{Gal}}
\global\long\def\Im{\operatorname{Im}}
\global\long\def\Frob{\operatorname{Frob}}
\global\long\def\SL{\operatorname{SL}}
\global\long\def\GL{\operatorname{GL}}


\maketitle

Suppose $F$ is a number field and $f \in F[x]$ is an irreducible polynomial of degree $n$ with splitting field $K$. How can we determine the $\Gal(K/F)$-conjugacy class of a Frobenius element, without explicitly constucting $K$? If we consider the Galois action on the roots, we can identify $\Gal(K/F)$ as a permutation group; the factorisation of $f$ over the residue field enables us to find the cycle type of Frobenius and hence we have it up to conjugacy in the symmetric group $S_n$. If the Galois group is isomorphic to the alternating group $A_5$ though, then this is insufficient if $f$ remains irreducible since there are two different conjugacy classes of $5$-cycles in $A_5$. Serre \cite[p.53]{Buhler1978} observed that computing a ``square root" of the discriminant of $f$ produced the extra necessary data. Roberts \cite{Roberts2004} then considered all alternating groups before Dokchitser and Dokchitser \cite{Dokchitser2013} generalised this to any finite group by constructing suitable resolvents.

In number theory, Galois extensions also arise from Galois representations; in this setting we have a natural linear action on the underlying vector space. We wish to incoporate this extra stucture so instead embed the Galois group into a matrix group as opposed to a permutation group to yield an alternative approach to the problem of distinguishing Frobenius elements. We shall not give a complete theory for differentiating conjugacy classes of an arbitrary matrix group, but consider an analogue of the $S_n$ versus $A_n$ situation: $\GL_n(\fl)$ versus $\SL_n(\fl)$. We illustrate our approach with the aid of elliptic curves, using the Weil pairing for our additional information. For the remainder of the paper, we shall abbreviate $\SL_2(\fl)$ and $\GL_2(\fl)$ to $\SL_2$ and $\GL_2$ respectively and say the $\GL_2$-conjugacy class of an element $\sigma \in \SL_2$ \emph{splits} if its $\SL_2$-conjugacy class is properly contained in its $\GL_2$-conjugacy class.

Let $E/F$ be an elliptic curve and fix a rational prime $l$. Then the action of $\Gal(\overline{F}/F)$ on the group $E[l]$ of $l$-torsion points gives rise to the mod $l$ Galois representation $$\rho_{E,l}: \Gal(\overline{F}/F) \rightarrow \GL_{2}(\fl),$$ which factors through $\Gal(K/F)$, where $K$ is the smallest extension of $F$ over which all $l$-torsion points are defined. Let $\mathfrak{p}$ be a prime of $F$ which is unramified in $K$ and doesn't divide the discriminant $\Delta_E$ of $E$ (it suffices to assume $\mathfrak{p} \nmid l\Delta_E$; this is the assumption we will generally use).

There are two standard pieces of information that we can acquire about the Frobenius element coming from its characteristic polynomial. Firstly, the determinant is equal to the absolute norm $q$ of $\mathfrak{p}$. We can also ascertain its trace by examining the number of points on the reduced curve (see for example Schoof's algorithm \cite{Schoof1985} or the refined Schoof-Elkies-Atkin algorithm). Unfortunately, these two pieces of data do not always completely distinguish the conjugacy class, even in $\GL_2$. 

\begin{remark}
	We shall suppose that $\Im \rho_{E,l}=\SL_2$, which implies that the $F$ contains a primitive $l^{th}$ root of unity $\zeta_l$ and $q \equiv 1 \mod{l}$. Recall that if $E/F$ does not have complex multiplication, then for all but finitely many rational primes $l$, $\zeta_l \in F$ implies that $\Im \rho_{E,l}=\SL_2$ by Serre's open image theorem.
\end{remark}

We now give a further criterion to distinguish between two classes in $\SL_2$ that are conjugate in $\GL_2$.

\begin{theorem}
\label{weilmethod}
	Let $E/F$ be an elliptic curve such that $\rho_{E,l}(\Gal(\overline{F}/F)) = \SL_2$ and $\mathfrak{p}$ be a prime of $F$ of absolute norm $q$ such that $\mathfrak{p} \nmid l\Delta_E$ and suppose that the $\GL_2$-conjugacy class of $\rho_{E,l}(\Frob_{\mathfrak{p}})$ splits.
	
	Let $\tilde{E}$ be the reduced curve at $\mathfrak{p}$ and suppose that $\{Q_1,Q_2\}$ is an ordered basis of $\tilde{E}[l]$ such that the action of the Frobenius automorphism $x \mapsto x^q$ acts as $\sigma \in \SL_2$ on $\tilde{E}[l]$ with respect to $\{Q_1,Q_2\}$.
	
	Then $\rho_{E,l}(\Frob_{\mathfrak{p}})$, written with respect to a global ordered basis $\{P_1,P_2\}$, is $\SL_2$-conjugate to $\sigma$ if and only if $$\langle P_1, P_2 \rangle_l \, \bmod{\mathfrak{p}} \equiv \langle Q_1, Q_2 \rangle_l^{k^2} \text{ for some } k \in \zz,$$ where $\langle \, , \, \rangle_l$ denotes the Weil pairing.	
\end{theorem}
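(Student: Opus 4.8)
The plan is to transport the whole situation to the reduced curve, where the $q$-power Frobenius $\varphi_q\colon x\mapsto x^q$ is an actual element rather than one defined only up to conjugacy, and then to observe that a single quantity -- the determinant of the matrix relating the two bases of $\tilde E[l]$ -- simultaneously decides $\SL_2$-conjugacy and governs the transformation of the Weil pairing. So first fix a prime $\mathfrak{P}$ of $\overline{F}$ above $\mathfrak{p}$. Since $\mathfrak{p}\nmid l\Delta_E$, the curve $E$ has good reduction at $\mathfrak{p}$, and reduction modulo $\mathfrak{P}$ gives an isomorphism $E[l]\xrightarrow{\sim}\tilde E[l]$ that is equivariant for the decomposition group at $\mathfrak{P}$ and compatible with the Weil pairing, and under which (as $\mathfrak{p}$ is unramified in $K$) $\Frob_{\mathfrak{P}}$ corresponds to $\varphi_q$; here $\mathfrak{p}\nmid l$ also ensures reduction is injective on $\mu_l$, so reducing $l$-th roots of unity $\bmod\,\mathfrak{p}$ is harmless. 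Writing $\bar P_i$ for the reduction of $P_i$, it follows that $A:=\rho_{E,l}(\Frob_{\mathfrak{P}})$, a matrix in $\SL_2$ since $\Im\rho_{E,l}=\SL_2$, computed with respect to $\{P_1,P_2\}$, equals the matrix of $\varphi_q$ with respect to $\{\bar P_1,\bar P_2\}$, and $\langle P_1,P_2\rangle_l\bmod\mathfrak{p}=\langle\bar P_1,\bar P_2\rangle_l$. Choosing a different prime above $\mathfrak{p}$ conjugates $A$ by an element of $\Im\rho_{E,l}=\SL_2$, so the $\SL_2$-class of $A$, which is what we want to pin down, is well defined.

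Next I would introduce the change-of-basis matrix $C\in\GL_2$ with $(Q_1,Q_2)=(\bar P_1,\bar P_2)C$. Comparing the matrix of $\varphi_q$ in the two bases gives $\sigma=C^{-1}AC$, so $A$ and $\sigma$ are indeed $\GL_2$-conjugate. On the other hand, bilinearity and alternation of the Weil pairing on $\tilde E[l]$ yield $\langle Q_1,Q_2\rangle_l=\langle\bar P_1,\bar P_2\rangle_l^{\det C}=(\langle P_1,P_2\rangle_l\bmod\mathfrak{p})^{\det C}$. Since $\{P_1,P_2\}$ is a basis, $\langle P_1,P_2\rangle_l$ is a primitive $l$-th root of unity, and so is its reduction; comparing exponents shows the congruence $\langle P_1,P_2\rangle_l\equiv\langle Q_1,Q_2\rangle_l^{k^2}\bmod\mathfrak{p}$ is solvable in $k\in\zz$ precisely when $k^2\det C\equiv1\bmod l$ is solvable, i.e. precisely when $\det C$ is a nonzero square in $\fl$.

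It remains to show that $A$ is $\SL_2$-conjugate to $\sigma$ if and only if $\det C\in(\fl^\times)^2$. From $A=C\sigma C^{-1}$, the $g\in\GL_2$ with $g\sigma g^{-1}=A$ form the coset $C\cdot Z_{\GL_2}(\sigma)$, so $A$ is $\SL_2$-conjugate to $\sigma$ iff this coset meets $\SL_2$, i.e. iff $\det C\in\det Z_{\GL_2}(\sigma)$. The scalar matrices lie in $Z_{\GL_2}(\sigma)$ and their determinants are exactly $(\fl^\times)^2$, which has index at most $2$ in $\fl^\times$; hence $\det Z_{\GL_2}(\sigma)$ is either $(\fl^\times)^2$ or all of $\fl^\times$. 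If it were all of $\fl^\times$, then since $C'\sigma C'^{-1}$ ranges over the whole $\GL_2$-class of $\sigma$ as $C'$ ranges over $\GL_2$, the same coset criterion would make every element of that class $\SL_2$-conjugate to $\sigma$, contradicting the hypothesis that the $\GL_2$-class splits. Therefore $\det Z_{\GL_2}(\sigma)=(\fl^\times)^2$, and combining with the previous paragraph, $A$ is $\SL_2$-conjugate to $\sigma$ if and only if $\det C\in(\fl^\times)^2$ if and only if the asserted Weil-pairing congruence holds.

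The one place that needs care rather than bookkeeping is the first step: producing the reduction isomorphism $E[l]\cong\tilde E[l]$ that is simultaneously Galois-compatible, carries $\Frob_{\mathfrak{P}}$ to $\varphi_q$, and intertwines the Weil pairings -- this is exactly where the hypotheses $\mathfrak{p}\nmid l$ and good reduction at $\mathfrak{p}$ are used. Once that is in hand, the argument reduces to the single observation that $\det C$ controls both the $\SL_2$-conjugacy of the two Frobenius matrices and the transformation law of the Weil pairing, together with the elementary fact that for an element of $\SL_2$ with splitting $\GL_2$-class the attainable determinants of its centraliser are precisely the squares.
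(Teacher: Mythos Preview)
Your proof is correct and follows essentially the same route as the paper: both arguments reduce the question to whether the matrix conjugating $\tau=\rho_{E,l}(\Frob_{\mathfrak p})$ to $\sigma$ has square determinant, and both read that off from the transformation law of the Weil pairing. The only differences are cosmetic: you are more explicit about the reduction isomorphism $E[l]\cong\tilde E[l]$ (which the paper tacitly uses in its last line), and where the paper shows ``non-square determinant $\Rightarrow$ not $\SL_2$-conjugate'' by producing an auxiliary element $\sigma_1$ in the other $\SL_2$-class, you phrase the same step via $\det Z_{\GL_2}(\sigma)=(\fl^\times)^2$.
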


\begin{proof}
	Write $\rho_{E,l}(\Frob_{\mathfrak{p}})=\tau$ (with respect to $P_1,P_2$) and observe that if $\tau=\sigma$ then the result trivially holds. By assumption, $\tau$ is $\GL_2$-conjugate to $\sigma$ so there exists $A \in \GL_2$ such that $\sigma=A^{-1}\tau A$. If $\det(A)=\lambda^2$ then let $A=\lambda A_1$ so $\det(A_1)=1$ and $\sigma=A_1^{-1}\tau A_1$ so they are $\SL_2$-conjugate. Conversely, since we have imposed that the $\GL_2$-conjugacy class splits, there exists $\sigma_1 \in \SL_2$ which is $\GL_2$-conjugate to $\sigma$, but not $\SL_2$-conjugate.
	
	Hence there exists $B \in \GL_2$ such that $\sigma=B^{-1}\sigma_1B$ and by the same argument, $\det(B)$ cannot be a square. If $\det A$ is not a square, then $\det(A)/\det(B)$ is a square so $\tau$ is $\SL_2$-conjugate to $\sigma_1$ therefore cannot be $\SL_2$-conjugate to $\sigma$.
	
	By the properties of the Weil pairing $\langle AP_1,AP_2 \rangle_l = \langle P_1,P_2 \rangle_l^{\det(A)}$, hence $\rho_{E,l}(\Frob_{\mathfrak{p}})$ is $\SL_2$-conjugate to $\sigma$ if and only if $\langle P_1, P_2 \rangle_l^{k^2} \mod{\mathfrak{p}} = \langle Q_1, Q_2 \rangle_l$ for some integer $k$ coprime to $l$. 
\end{proof}

We don't actually need the image to be $\SL_2$ to apply the above theorem. However, $F$ may not contain the relevant roots of unity so to combat this we should consider the minimal polynomials.

Let $m_F(\alpha)$ denote the minimal polynomial of $\alpha$ over $F$, for any field $F$ and algebraic number $\alpha$, and define $\GL_2^{\square}:=\{A \in \GL_2 \, | \, \det A \text{ is a square }\}$.

\begin{theorem}
	Let $E/F$ be an elliptic curve and suppose that $\rho_{E,l}(\Gal(\overline{F}/F)) = G \subset \GL_2^{\square}$. Let $\mathfrak{p}$ be a prime of norm $q$ such that $\mathfrak{p} \nmid l\Delta_E$ and $q \equiv 1 \mod{l}$. Let $\sigma \in \SL_2$ be $\GL_2$-conjugate to $\rho_{E,l}(\Frob_{\mathfrak{p}})$ and suppose that the $G$-conjugacy class of $\sigma$ is equal to the intersection of $G$ with its $\SL_2$-conjugacy class.
	
	Let $\tilde{E}$ be the reduced curve at $\mathfrak{p}$ and suppose that $\{Q_1,Q_2\}$ is an ordered basis of $\tilde{E}[l]$ such that the action of the Frobenius automorphism $x \mapsto x^q$ acts as $\sigma$ on $\tilde{E}[l]$ with respect to $\{Q_1,Q_2\}$.
	
	Then $\rho_{E,l}(\Frob_{\mathfrak{p}})$, written with respect to a global ordered basis $\{P_1,P_2\}$, is $G$-conjugate to $\sigma$ if and only if $$m_{\mathcal{F}}(\langle Q_1, Q_2 \rangle_l^{k^2}) \, \text{ divides } \, m_F(\langle P_1, P_2 \rangle_l) \, \bmod{\mathfrak{p}}$$ for some $k \in \zz$.	
\end{theorem}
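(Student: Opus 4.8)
The plan is to re-run the proof of Theorem~\ref{weilmethod}, making two changes: replace $\GL_2$-conjugacy by $G$-conjugacy throughout, and replace the bare Weil pairing value by its minimal polynomial, the latter being exactly what lets us dispense with the assumption $\zeta_l\in F$.

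\textbf{Set-up.} Write $\tau:=\rho_{E,l}(\Frob_{\mathfrak p})$ with respect to $\{P_1,P_2\}$. Since $\mathfrak p\nmid l\Delta_E$, the curve $E$ has good reduction at $\mathfrak p$ and reduction induces a $\Gal(\overline F/F)$-equivariant isomorphism $E[l]\to\tilde E[l]$ compatible with the Weil pairings; hence $\tilde P_i:=P_i\bmod\mathfrak p$ is an ordered basis of $\tilde E[l]$ on which the $q$-power Frobenius acts as $\tau$, while it acts as $\sigma$ on $\{Q_1,Q_2\}$ by hypothesis. Letting $A\in\GL_2$ be the change of basis relating these two bases we obtain $\sigma=A^{-1}\tau A$ and, just as in Theorem~\ref{weilmethod},
\[
\langle Q_1,Q_2\rangle_l=\bigl(\langle P_1,P_2\rangle_l\bmod\mathfrak p\bigr)^{\det A}.
\]
Also $\det\tau=q\equiv 1\pmod l$, so $\tau\in\SL_2$, and $\tau\in G$ because $\Im\rho_{E,l}=G$.

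\textbf{Reduction to a square condition.} Because $\tau\in G$ and, by hypothesis, the $G$-conjugacy class of $\sigma$ equals $G\cap(\SL_2\text{-conjugacy class of }\sigma)$, the element $\tau$ is $G$-conjugate to $\sigma$ if and only if it is $\SL_2$-conjugate to $\sigma$. Running the argument in the proof of Theorem~\ref{weilmethod} with $\SL_2$ in place of $\GL_2$, and using $\sigma=A^{-1}\tau A$, this holds if and only if $\det A$ is a square in $\fl^\times$ (when the ambient $\GL_2$-conjugacy class does not split both conditions hold automatically, so the real content is again in the splitting case).

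\textbf{Identifying the divisibility.} Since $q\equiv 1\pmod l$, the residue field $\mathcal F$ of $\mathfrak p$ contains $\mu_l$, so $m_{\mathcal F}(\langle Q_1,Q_2\rangle_l^{k^2})=x-\langle Q_1,Q_2\rangle_l^{k^2}$ is linear and divides $m_F(\langle P_1,P_2\rangle_l)\bmod\mathfrak p$ precisely when $\langle Q_1,Q_2\rangle_l^{k^2}$ is a root of that reduced polynomial. As $\langle P_1,P_2\rangle_l$ is a primitive $l$-th root of unity, $m_F(\langle P_1,P_2\rangle_l)=\prod_{g\in\Gal(F(\zeta_l)/F)}\bigl(x-g\langle P_1,P_2\rangle_l\bigr)$, a monic polynomial with coefficients in $\mathcal O_F$, and each $g$ acts on $\mu_l$ through the mod~$l$ cyclotomic character $\chi_l$, whose image is $\det(\Im\rho_{E,l})=\det G=:H\le(\zz/l)^\times$ by the Weil pairing. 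Reducing mod $\mathfrak p$ (which is separable since $\mathfrak p\nmid l$), the roots of this polynomial are exactly $\{(\langle P_1,P_2\rangle_l\bmod\mathfrak p)^h:h\in H\}$; combined with the identity of the set-up, the divisibility holds for some $k\in\zz$ — necessarily prime to $l$, as otherwise the left-hand value is $1$, which is not a root — if and only if $k^2\det A\in H$ in $(\zz/l)^\times$ for some $k$ prime to $l$, i.e.\ $\det A\in H\cdot(\fl^\times)^2$. Finally, the hypothesis $G\subset\GL_2^{\square}$ forces $H=\det G\subset(\fl^\times)^2$, so $H\cdot(\fl^\times)^2=(\fl^\times)^2$ and the divisibility amounts exactly to $\det A$ being a square, which by the previous step is equivalent to $\tau$ being $G$-conjugate to $\sigma$; this closes the loop.

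\textbf{Expected main obstacle.} The determinant bookkeeping is routine once Theorem~\ref{weilmethod} is in hand; the substantive points are (i) the standard but indispensable fact that reduction at $\mathfrak p$ intertwines the two Weil pairings and is Galois-equivariant, which is what allows $\sigma$ and $\tau$ to be compared through a single matrix $A$ with the displayed pairing identity, and (ii) computing the roots of $m_F(\langle P_1,P_2\rangle_l)\bmod\mathfrak p$ and recognising that the ambiguity by $\Gal(F(\zeta_l)/F)=\Im\chi_l=\det G$ is swallowed by the square ambiguity precisely because $G\subset\GL_2^{\square}$; without that hypothesis one only obtains the weaker statement that $\det A$ lies in $\det(G)\cdot(\fl^\times)^2$.
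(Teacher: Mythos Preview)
The paper gives no separate proof of this statement—it is presented as the evident extension of Theorem~\ref{weilmethod}—and your argument is precisely that extension: reduce $G$-conjugacy to $\SL_2$-conjugacy via the hypothesis on conjugacy classes, identify the roots of $m_F(\langle P_1,P_2\rangle_l)\bmod\mathfrak p$ as the $\det G$-orbit of the reduced pairing, and use $\det G\subset(\fl^\times)^2$ so that the Galois ambiguity is absorbed by the square ambiguity. This is correct and is exactly what the paper intends.

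One caveat: your parenthetical that in the non-split case ``both conditions hold automatically'' is not accurate. When the $\GL_2$-class of $\sigma$ does not split, $C_{\GL_2}(\sigma)$ contains matrices of every determinant, so the particular change-of-basis matrix $A$ fixed by the choice of $\{Q_1,Q_2\}$ may have non-square determinant even though $\tau$ and $\sigma$ are $\SL_2$-conjugate; your divisibility criterion then fails while $G$-conjugacy holds. This is really a lacuna in the theorem as stated (Theorem~\ref{weilmethod} carries an explicit splitting hypothesis, and it is tacitly needed here as well) rather than a flaw in your method, but the non-split case should not be waved off as automatic.
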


\begin{example}
	Let $E/\qq(\zeta_3)$ be the elliptic curve $y^2=x^3+x+1$ (Cremona label 496a1), where $\zeta_3=e^{2\pi i/3}$. The image of the mod $3$ representation of $E/\qq(\zeta_3)$ is isomorphic to $\SL_2(\mathbb{F}_3)$. Let $\mathfrak{p}=(13,\zeta_3-3)$ be a prime of $\qq(\zeta_3)$. We shall compute the $\SL_2$-conjugacy class of $\rho_{E,3}(\Frob_{\mathfrak{p}})$.
	
	Choose a global basis $P_1=(\alpha_1,\beta_1), P_2=(\overline{\alpha_1},\overline{\beta_1}) \in E(\cc)$, where $\alpha_1 \approx 0.571 + 1.754i, \beta_1 \approx 0.984 + 2.761i$ and observe that $\langle P_1,P_2 \rangle_3=\zeta_3$.
	
	Now the reduced curve $\tilde{E}$ has $18$ points so the trace of Frobenius is $2 \mod{3}$, hence the image of Frobenius (with respect to $\{P_1,P_2\}$) is $\SL_2$-conjugate to $\left(\begin{smallmatrix}
			1 & n \\
			0 & 1
		\end{smallmatrix}\right)$ for some $n \in \{0,1,2\}$. These all define distinct $\SL_2$-conjugacy classes, with the non-identity elements being $\GL_2$-conjugate.
		
	A quick check shows that $\tilde{E}(\mathbb{F}_{13})[3] \neq 9$ hence $n \neq 0$ so $\left(\begin{smallmatrix}
			1 & 1 \\
			0 & 1
		\end{smallmatrix}\right)$ is $\GL_2$-conjugate to $\rho_{E,3}(\Frob_{\mathfrak{p}})$. 
		
	Now $\tilde{E}[3]$ is defined over the cubic extension $\mathbb{F}_{13}[\alpha]$, where $\alpha$ has minimal polynomial $x^3+2x-2$. We compute that $Q_1=(10,6)$, $Q_2=(8\alpha^2-\alpha+3,7\alpha^2+4\alpha-1)$ is a basis of $\tilde{E}[3]$ such that the Frobenius automorphism acts as $\left(\begin{smallmatrix}
			1 & 1 \\
			0 & 1
		\end{smallmatrix}\right)$ here.
	
	The criterion we have in this case is equivalent to checking whether $\langle P_1, P_2 \rangle_3 \equiv \langle Q_1,Q_2 \rangle_3 \mod{\mathfrak{p}}$. A quick calculation shows that $\langle Q_1,Q_2 \rangle_3 =3$ so $\rho_{E,3}(\Frob_{\mathfrak{p}})$ is $\SL_2$-conjugate to $\left(\begin{smallmatrix}
			1 & 1 \\
			0 & 1
		\end{smallmatrix}\right)$ with respect to $\{P_1,P_2\}$.
\end{example}

\begin{example}
	Consider the elliptic curve $y^2+y=x^3-x^2$ (Cremona label 11a3) defined over $\qq(\sqrt{5})$. The mod $5$ image is isomorphic to $D_{10}$, the dihedral group of order 10. This is not contained in $\SL_2(\mathbb{F}_5)$ but is contained in $\GL_2^{\square}(\mathbb{F}_5)$. 
	
	Let $\mathfrak{p}=(\frac{1+5\sqrt{5}}{2})$ be a prime of $\qq(\sqrt{5})$ above $31$. Choose the ordered global basis $P_1 \approx (1.69-1.54i,-1.27+2.83i), P_2=(1,-1)$ so $\langle P_1, P_2 \rangle_5 = e^{2\pi i/5}$. This is not an element of $\qq(\sqrt{5})$ so we instead take its minimal polynomial $m_{\qq(\sqrt{5})}(e^{2\pi i/5})=x^2+\frac{1}{2}(1+\sqrt{5})x+1$.
	
	One can check that $\Frob_{\mathfrak{p}}$ has order $5$ using the group structure of the reduced curve and so is conjugate to either $\left(\begin{smallmatrix}
			1 & 1 \\
			0 & 1
		\end{smallmatrix}\right)$ or $\left(\begin{smallmatrix}
			1 & 2 \\
			0 & 1
		\end{smallmatrix}\right)$ under the ordered basis $\{P_1,P_2\}$.
	
	Let $Q_1=(1,-1), Q_2=(26\alpha^3+8\alpha^2+23\alpha+12, 16\alpha^4+17\alpha^3+29\alpha^2+17\alpha+2)$, where $\alpha$ has minimal polynomial $x^5+7x+28$. Then the Frobenius automorphism acts on $\tilde{E}[5]$ as $\left(\begin{smallmatrix}
			1 & 1 \\
			0 & 1
		\end{smallmatrix}\right)$ with respect to the ordered basis $\{Q_1,Q_2\}$.
	
	We compute that $\langle Q_1,Q_2 \rangle_5 = 8$. Now $m_{\qq(\sqrt{5})}(e^{2\pi i/5}) \equiv x^2+13x+1 \mod{\mathfrak{p}}$ which doesn't have $8$ as a root so $\Frob_{\mathfrak{p}}$ cannot be conjugate to $\left(\begin{smallmatrix}
			1 & 1 \\
			0 & 1
		\end{smallmatrix}\right)$. Redoing the calculation with $\left(\begin{smallmatrix}
			1 & 2 \\
			0 & 1
		\end{smallmatrix}\right)$, (where we take the basis $\{Q_1,Q_1+2Q_2\}$), the Weil pairing is $2$ which is a root of $m_{\qq(\sqrt{5})}(e^{2\pi i/5}) \mod{\mathfrak{p}}$. Hence $\Frob_{\mathfrak{p}}$ is $D_{10}$-conjugate to $\left(\begin{smallmatrix}
			1 & 1 \\
			0 & 1
		\end{smallmatrix}\right)$ with respect to the basis $\{P_1,P_2\}$.
\end{example}

\begin{remark}
	In our second example, we could have base changed our curve to $\qq(\zeta_5)$ as the Frobenius element would be unchanged since $\mathfrak{p}$ splits. Even if $\mathfrak{p}$ was inert, we could also have dealt with that as we would just get the other conjugacy class instead.
\end{remark}

\begin{remark}
We ran our method against the current algorithm of Dokchitser and Dokchitser in Magma. Their algorithm is not yet implemented over number fields so we only ran ours for rational primes which were completely split in the base field so the Frobenius element is unchanged. In addition, the bulk of the computation in their method consists of constructing a polynomial for each conjugacy class first. For a fairer comparison, we chose to time the results to determine the Frobenius elements at 1000 suitable rational primes in the mod $3,5,7$ and $11$ representations of the elliptic curve $y^2=x^3+x+1$; the computation was run on a machine with an AMD Opteron(tm) Processor 6174 and a speed of 2200MHz. We tabulate our results below.

\begin{center}
\begin{tabular}{l|l|l}
$l$  & Weil Pairing Method & Dokchitsers' Method \\ \hline
$3$  & $5.7$ seconds     & $0.5$ seconds     \\
$5$  & $25.7$ seconds    & $11.4$ seconds    \\
$7$  & $88.7$ seconds    & $1032.3$ seconds  \\
$11$ & $373.4$ seconds   & $>7$ days             
\end{tabular}
\end{center}

\end{remark}

The final thing we wish to address is how beneficial elliptic curves were here as to the feasibility of this method for Galois representations arising from other types of objects. We can also do this for larger dimensional vector spaces, so we'll incorporate this into our theorem.

We first recall a construction which generalises the precise properties of the Weil pairing that we want. Let $V/\fl$ be a vector space of dimension $n$. Then the $n^{th}$ exterior power $\Lambda^nV^*$ is a one dimensional vector space of alternating multilinear forms, such that for any nonzero $T \in \Lambda^nV^*$ we have
\begin{enumerate}
	\item $T(v_1,\cdots, v_n)=0$ if and only if $\{v_1, \cdots, v_n\}$ are linearly dependent,
	\item $T(Av_1,\cdots, Av_n)=\det(A)T(v_1,\cdots,v_n)$ for all matrices $A \in \GL_n$.
\end{enumerate}

In the case of the Weil pairing, we identified the image $\fl$ with the $l^{th}$ roots of unity and shall do so again in our final theorem. For a field $F$, we let $\mu_l(F)$ denote the $l^{th}$ roots of unity in $F$.

\begin{theorem}
\label{genthm}
	Let $K/F$ be a Galois extension of number fields, such that $\rho: \Gal(K/F) \rightarrow  \SL_n(\fl)$ is an isomorphism for some rational prime $l$ and positive integer $n$. Let $\mathfrak{p}$ be a prime of $F$ which is unramified in $K$ and $\mathfrak{P}$ a prime of $K$ above $\mathfrak{p}$ with corresponding residue fields $\mathcal{F}$ and $\mathcal{K}$. Write $G=\Gal(K/F)$ and $\overline{G}=\Gal(\mathcal{K}/\mathcal{F})$, where we identify the latter with the decomposition subgroup.

	Let $V, \overline{V}$ be two $\fl$-vector spaces of dimension $n$. Suppose $V$ (respectively $\overline{V}$) has a faithful action of $G$ (respectively $\overline{G}$) and there exists an isomorphism $\theta: V \rightarrow \overline{V}$ such that $\theta \overline{g} = \overline{g} \theta$ for all $\overline{g} \in \overline{G}$. Furthermore, suppose that there are nonzero alternating multilinear forms $T_F \in \Lambda^n V^*$ and $T_{\mathcal{F}} \in \Lambda^n\overline{V}^*$ such that the diagram

\begin{center}
\begin{tikzcd}
V^n \arrow[r, "T_F"] \arrow[d,"\theta_n"] & \mu_l(F) \arrow[d, "\mod{\mathfrak{p}}"] \\
\overline{V}^n \arrow[r,  "T_{\mathcal{F}}"] & \mu_l(\mathcal{F})
\end{tikzcd}
\end{center}

	commutes, where $\theta_n(v_1,\cdots,v_n):=(\theta(v_1),\cdots,\theta(v_n))$.
	
	Suppose the $\GL_n(\fl)$-conjugacy class of $\rho(\Frob_{\mathfrak{p}})$ splits into $m$ classes in $\SL_n(\fl)$ and let $H \subset \fl^{\times}$ be the unique subgroup such that $[\fl^{\times}:H]=m$. Suppose $\sigma \in \SL_n(\fl)$ is $\GL_n(\fl)$-conjugate to $\rho(\Frob_{\mathfrak{p}})$ and let $\overline{B}$ be an ordered basis of $\overline{V}$ such that the Frobenius automorphism acts as $\sigma$ on $\overline{V}$ with respect to $\overline{B}$. Then $\rho(\Frob_{\mathfrak{p}})$, written with respect to a global ordered basis $B$, is $\SL_n(\fl)$-conjugate to $\sigma$ if and only if $$T_F(B) \, \bmod{\mathfrak{p}} \, \equiv T_{\mathcal{F}}(\overline{B})^h \qquad \text{ for some } h \in H.$$
\end{theorem}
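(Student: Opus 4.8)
The plan is to replay the proof of Theorem~\ref{weilmethod}, with the Weil pairing replaced by the forms $T_F,T_{\mathcal F}$ and the index‑$2$ subgroup of squares replaced by $H$; so I need three things: an intrinsic description of $H$ in terms of $\sigma$, a change‑of‑basis matrix relating $\rho(\Frob_{\mathfrak p})$ to $\sigma$ whose determinant is read off through the forms, and the observation that ``this determinant lies in $H$'' is precisely the stated congruence. The first of these is where the hypothesis on $m$ enters, and I expect it to be the only genuine obstacle. Let $C=C_{\GL_n(\fl)}(\sigma)$; conjugation makes $\GL_n(\fl)$ act transitively on the set of $\SL_n(\fl)$‑conjugacy classes contained in the $\GL_n(\fl)$‑class of $\sigma$, and the stabiliser of the class of $\sigma$ is exactly $\SL_n(\fl)\,C$ (if $g\sigma g^{-1}=s\sigma s^{-1}$ with $s\in\SL_n(\fl)$ then $s^{-1}g\in C$, and conversely). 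Hence the number of these classes is $[\GL_n(\fl):\SL_n(\fl)\,C]=[\fl^\times:\det C]$ via the determinant map. By assumption this equals $m$, and since $\fl^\times$ is cyclic it has a unique subgroup of index $m$, so $H=\det C$.

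Next I would produce the change of basis. Write $\tau=\rho(\Frob_{\mathfrak p})$ with respect to $B$. Because the Frobenius automorphism generates $\overline G$ and $\theta$ commutes with the $\overline G$‑action, $\theta$ is $\Frob_{\mathfrak p}$‑equivariant, so the Frobenius automorphism acts on $\overline V$ as $\tau$ in the basis $\theta_n(B)$, while by hypothesis it acts as $\sigma$ in the basis $\overline B$. Therefore there is a unique $P\in\GL_n(\fl)$, the change‑of‑basis matrix from $\theta_n(B)$ to $\overline B$, with $\tau=P\sigma P^{-1}$. Expressing $\overline B$ in terms of $\theta_n(B)$ and invoking property~(2) of $T_{\mathcal F}$ — in the multiplicative model $\mu_l(\mathcal F)$ of $\fl$, where scaling by $d\in\fl^\times$ becomes the $d$‑th power map — together with commutativity of the square, gives
$$T_{\mathcal F}(\overline B)=T_{\mathcal F}\!\big(\theta_n(B)\big)^{\det P}=\big(T_F(B)\bmod\mathfrak p\big)^{\det P}.$$

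It remains to combine the two. On the one hand, $\tau=P\sigma P^{-1}$ is $\SL_n(\fl)$‑conjugate to $\sigma$ iff $P'\sigma P'^{-1}=P\sigma P^{-1}$ for some $P'\in\SL_n(\fl)$, iff $P'\in PC$ for some such $P'$, iff $1\in\det(P)\det(C)=\det(P)H$, i.e.\ iff $\det P\in H$. On the other hand, the existence of nonzero $T_F,T_{\mathcal F}$ valued in $\mu_l(F),\mu_l(\mathcal F)$ already forces $\zeta_l\in F$ and $q\equiv1\bmod l$, hence $\mathfrak p\nmid l$, and by property~(1) applied to the basis $B$ the element $w:=T_F(B)\bmod\mathfrak p$ is a primitive $l$‑th root of unity in $\mathcal F$; so for $d\in\fl^\times$ we have $w^d\in\{w^h:h\in H\}$ if and only if $d\in H$. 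Putting these together, $T_F(B)\bmod\mathfrak p\equiv T_{\mathcal F}(\overline B)^h=w^{(\det P)h}$ for some $h\in H$ exactly when $(\det P)^{-1}\in H$, that is when $\det P\in H$, that is when $\rho(\Frob_{\mathfrak p})$ is $\SL_n(\fl)$‑conjugate to $\sigma$; this is the assertion. Apart from the determination of $H$ in the first step, the only point needing care is keeping straight the two identifications $\Lambda^nV^*\cong\mu_l(F)$ and $\fl\cong\mu_l$, so that property~(2) genuinely contributes a $(\det P)$‑th power rather than a scalar multiple.
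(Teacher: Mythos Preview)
Your proof is correct and follows essentially the same strategy as the paper's: replay the argument of Theorem~\ref{weilmethod}, replacing the Weil pairing by the forms $T_F,T_{\mathcal F}$ and the subgroup of squares by $H$, and reducing the question to whether the determinant of a change-of-basis matrix lies in $H$. Your direct identification $H=\det C_{\GL_n(\fl)}(\sigma)$ via the orbit--stabiliser count on $\SL_n$-classes is in fact cleaner and more self-contained than the paper's terse sketch, which reaches the same conclusion through a comparison of centralisers in $\GL_n(\fl)$ and in the subgroup $\GL_n^{(n)}(\fl)$ of matrices whose determinant is an $n$-th power.
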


\begin{proof}
	When $n=2$, the proof is the same as the proof of Theorem \ref{weilmethod} as $m \leq 2$. 
	
	Note that we used that the centraliser of $\sigma$ in $\GL_2$, $C_{\GL_2}(\sigma)$, was equal to its centraliser in $\GL_2^{\square}$ when the class splits; for general $n$, the latter group should be replaced by the group $\GL_n^{(n)}(\fl)$ of matrices whose determinant is an $n^{th}$ power. Moreover, we necessarily have $(\fl^{\times})^n \subset H$ and $|C_{\GL_n(\fl)}(\sigma)|=|C_{\GL_n^{(n)}(\fl)}(\sigma)|[H:(\fl^{\times})^n]$, so a similar computation then proves this more general version.
\end{proof}

\begin{Ack}
	I wish to thank Vladimir Dokchitser for many useful discussions and Tim Dokchitser for his comments. I would also like to thank both the University of Warwick and King's College London where this research was carried out.
\end{Ack}

\nocite{Bosma1997}

\bibliographystyle{amsalpha}
\bibliography{Frob}

\providecommand{\bysame}{\leavevmode\hbox to3em{\hrulefill}\thinspace}
\providecommand{\MR}{\relax\ifhmode\unskip\space\fi MR }
\providecommand{\MRhref}[2]{%
  \href{http://www.ams.org/mathscinet-getitem?mr=#1}{#2}
}
\providecommand{\href}[2]{#2}
\begin{thebibliography}{BCP97}

\bibitem[BCP97]{Bosma1997}
W.~Bosma, J.~Cannon, and C.~Playoust, \emph{{The Magma Algebra System. I. The
  User Language}}, J. Symbolic Comput. \textbf{24} (1997), 235--265.

\bibitem[Buh78]{Buhler1978}
J.~Buhler, \emph{{Icosahedral Galois Representations}}, vol. 654,
  Springer-Verlag, 1978.

\bibitem[DD13]{Dokchitser2013}
T.~Dokchitser and V.~Dokchitser, \emph{{Identifying Frobenius Elements in
  Galois Groups}}, Algebra and Number Theory \textbf{7} (2013), no.~6,
  1325--1352.

\bibitem[Rob04]{Roberts2004}
D.P. Roberts, \emph{{Frobenius Classes in Alternating Groups}}, Rocky Mountain
  J. Math \textbf{34} (2004), no.~4, 1483--1496.

\bibitem[Sch85]{Schoof1985}
R.~Schoof, \emph{{Elliptic Curves Over Finite Fields and the Computation of
  Square Roots mod p}}, Mathematics of Computation \textbf{44} (1985), no.~170,
  483--494.

\end{thebibliography}
\end{document}